\documentclass[11pt]{amsart}
\usepackage{a4wide}
\usepackage{amssymb}
\usepackage{mathrsfs}
\usepackage{enumerate}
\theoremstyle{plain}
\newtheorem{theorem}{Theorem}
\newtheorem*{theorem*}{Theorem}
\newtheorem{lemma}[theorem]{Lemma}
\newtheorem{proposition}[theorem]{Proposition}
\theoremstyle{definition}
\newtheorem*{remark*}{Remark}
\newtheorem{remark}[theorem]{Remark}
\newcommand{\NN}{\mathbf{N}}

\begin{document}
\title[Normal generation of locally compact groups]{Normal generation of locally compact groups}
\author[A. Eisenmann and N. Monod]{A. Eisenmann and N. Monod}
\address{EPFL, Switzerland}
\thanks{Supported in part by the European Research Council and the Swiss National Science Foundation.}
\begin{abstract}
It is a well-known open problem since the 1970s whether a finitely generated perfect group can be normally generated by a single element or not. We prove that the topological version of this problem has an affirmative answer as long as we exclude infinite discrete quotients (which is probably a necessary restriction).
\end{abstract}
\maketitle

\section{Introduction}
Let $G$ be a perfect group. Can $G$ be \emph{normally generated} by a single element?

\smallskip

Equivalently: can $G$ be made trivial by adding a single relation? In other words yet: does $G$ contain some element not belonging to any proper normal subgroup?

\bigskip
This is well-known to be true when $G$ is \emph{finite} (see e.g.~\cite[4.2]{Lennox-Wiegold} for a stronger fact). For infinite groups, it is a long-standing open problem attributed to J.~Wiegold; see~\cite[FP14]{Baumslag-Myasnikov-Shpilrain} and~\cite[5.52]{Mazurov-Khukhro}. The group $G$ is assumed to be finitely generated in order to avoid obvious counter-examples (infinite direct sums of perfect groups). It is widely believed to be false: for instance, Lennox and Wiegold conjecture that a free product of $n$ finite groups cannot be normally generated by less than $n/2$ elements~\cite{Lennox-Wiegold}. A completely different potential source of counter-examples is proposed in~\cite{Monod-Ozawa-Thom}.

\bigskip
The question can be asked more generally for topological groups $G$ (which we always suppose Hausdorff). Here $G$ is called \emph{perfect}  if it has no non-trivial continuous homomorphism to an abelian topological group, or equivalently if its commutator subgroup is dense. Likewise, it is deemed \emph{normally generated} by a single element $g\in G$ if the smallest closed normal subgroup containing $g$ is $G$ itself.

\smallskip
If $G$ has an infinite discrete quotient group, then the putative counter-examples for abstract groups will of course be \emph{a fortiori} counter-examples for $G$ itself.  Rather surprisingly, the question has a positive answer when we exclude such quotients:

\begin{theorem}\label{thm:main}
Let $G$ be a compactly generated locally compact group without infinite discrete quotient. If $G$ is perfect, then it is normally generated by a single element.
\end{theorem}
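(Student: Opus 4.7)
The plan is to reduce the problem via structure theory to the Lie case, and then to lift a normal generator across a compact kernel. By a Gleason--Yamabe-type theorem, in the ``Lie-like'' situation there exists a compact normal subgroup $K \triangleleft G$ with $G/K$ a Lie group; the hypothesis of no infinite discrete quotient forces the component group of $G/K$ to be finite. The remaining case---groups containing no nontrivial compact normal subgroup at all---is essentially topologically simple (via the sort of structure theory of compactly generated locally compact groups developed by Caprace--Monod), and for a topologically simple group any nontrivial element is trivially a normal generator. It therefore suffices to handle Lie groups and the lifting step.

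In the Lie case, the identity component $G^0$ has perfect Lie algebra, admitting a Levi decomposition $\mathfrak{g}^0 = \mathfrak{s} \ltimes \mathfrak{r}$; perfection forces $[\mathfrak{s},\mathfrak{r}] + [\mathfrak{r},\mathfrak{r}] = \mathfrak{r}$, so $\mathfrak{s}$ acts nontrivially on $\mathfrak{r}/[\mathfrak{r},\mathfrak{r}]$. I would take $g_0 \in G^0$ regular in a maximal torus of the Levi subgroup: by the classical fact that such a regular element has a Zariski-dense conjugacy class in the semisimple Levi, its $G^0$-conjugates topologically span a dense subgroup of the Levi, and perfection then propagates via commutators $[\mathfrak{s},\mathfrak{r}]$ to a dense subgroup of all of $G^0$. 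Multiplying by a lift of a normal generator of the finite perfect group $G/G^0$ yields a normal generator of the Lie group $G$.

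The main obstacle is the lifting step across the compact kernel $K$. Any lift $g \in G$ of a normal generator $\bar g \in G/K$ satisfies $\langle\!\langle g \rangle\!\rangle \cdot K = G$, but the quotient $G / \langle\!\langle g \rangle\!\rangle$ is a quotient of the compact group $K$ and can be nontrivial: for instance in $G = \mathrm{SO}(3) \times \mathrm{PSL}_2(\mathbb{R})$ the lift $(e,h)$ of a hyperbolic element $h$ gives normal closure $\{e\} \times \mathrm{PSL}_2(\mathbb{R})$, whereas any lift $(k,h)$ with $k \neq e$ recovers $G$ via commutator identities. The plan is to generalise this observation: vary the lift over the compact coset $gK$ and argue, by a Baire category argument in $K$, that the set of ``good'' lifts $gk$ (those with $K \subseteq \langle\!\langle gk \rangle\!\rangle$) is a dense $G_\delta$ in $K$. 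Perfection should enter via the commutator map $G \times K \to K$, $(h,k) \mapsto [h,k]$, which must produce dense subgroups of $K$ from conjugates of generic lifts. Formalising this genericity, especially when $K$ is not itself a Lie group, is where the technical heart of the proof is expected to lie, and where all three hypotheses (compact generation, perfection, absence of infinite discrete quotients) are likely to be used in concert.
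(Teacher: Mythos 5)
Your outline fails precisely at the step that is the actual content of the theorem: the totally disconnected case. Gleason--Yamabe produces a compact normal subgroup $K\lhd G$ with $G/K$ Lie only when $G$ is almost connected; in general it applies only to some open subgroup. After the connected part is dealt with, what remains is a totally disconnected, compactly generated group, which typically admits \emph{no} compact normal subgroup with Lie (i.e.\ discrete) quotient unless it is already compact-by-finite. Your fallback claim --- that a group with no nontrivial compact normal subgroup is ``essentially topologically simple'' --- is false: $\mathrm{PSL}_2(\mathbb{Q}_p)\times\mathrm{PSL}_2(\mathbb{Q}_\ell)$, or $\mathbb{Q}_p^2\rtimes\mathrm{SL}_2(\mathbb{Q}_p)$, are perfect, compactly generated, have no infinite discrete quotient and no nontrivial compact normal subgroup, yet are far from topologically simple. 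What the structure theory of~\cite{Caprace-Monod_compact} actually provides, and what the paper's proof runs on, is much weaker: the discrete residual $G^+$ (the intersection of all open normal subgroups) is a characteristic \emph{cocompact} subgroup with no nontrivial discrete quotient, and there are only \emph{finitely many} maximal non-cocompact closed normal subgroups $M_1,\dots,M_k$. One then chooses $g$ whose image normally generates the perfect profinite group $G/G^+$ (the profinite case needs its own compactness argument on top of the finite case, cf.\ Lemma~\ref{lem:pro}), so that no element of the coset $gG^+$ lies in a proper cocompact normal subgroup, and shows by Baire category that $gG^+$ cannot be covered by $M_1\cup\dots\cup M_k$: otherwise some $G^+\cap M_i$ would be open in $G^+$, contradicting that $G^+$ has no discrete quotients while $M_i$ is not cocompact. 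None of this finiteness structure is recoverable from your outline, so the totally disconnected case is simply missing.

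Concerning the lifting step on the connected side, which you rightly flag as delicate: the paper never solves it for an arbitrary compact kernel, and your proposed Baire-category genericity in a general compact $K$ is exactly the part you have not established. Instead, the paper arranges never to face an arbitrary kernel. Using the Hofmann--Morris structure of compact connected groups, the maximal compact normal subgroup $K$ of $G^\circ$ is peeled apart into kernels of only two kinds: closed solvable ones (the centre, $K^\circ$ once its commutator subgroup is killed, the radical), across which lifting is easy because a dense image of a solvable group in a perfect group forces triviality (Lemma~\ref{lem:sol}); and kernels that are products of topologically simple groups, where the lift is corrected from $g$ to $gh$ with $h$ chosen in the kernel to be nontrivial in every relevant simple factor (Proposition~\ref{prop:simple}). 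This last trick is a rigorous, purely algebraic version of your idea of varying the lift over the coset $gK$ (your $\mathrm{SO}(3)\times\mathrm{PSL}_2(\mathbb{R})$ example is exactly in its spirit), but it requires the decomposition of the kernel into solvable and product-of-simples layers, not a density argument; as it stands, your genericity claim is a hope, not a proof.
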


This result uses structure theory, both classical~\cite{Hofmann-Morris, Montgomery-Zippin} and very recent~\cite{Caprace-Monod_compact}.

\begin{remark}\label{rem:ass}
Compact generation is the replacement for the necessary assumption of finite generation. It remains necessary even when there is no infinite discrete quotient, as shown by an example constructed for us by Y.~de Cornulier, see Section~\ref{sec:Yves} below.
\end{remark}

\subsection*{Acknowledgements}
Our motivation for this work came from discussions with Taka Ozawa and Andreas Thom, as always pleasant and inspiring. We are grateful to Yves de Cornulier for allowing us to present his example for Remark~\ref{rem:ass}.

\section{Proof of the Theorem}
We begin with a few simple facts. It is rather straightforward to go from finite groups to profinite groups:

\begin{lemma}\label{lem:pro}
A perfect profinite group is normally generated by a single element.
\end{lemma}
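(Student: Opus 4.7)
The plan is to reduce to the known finite case by a compactness argument. Write $G$ as an inverse limit $G = \varprojlim G/N$, where $N$ ranges over the directed family $\mathcal{N}$ of open normal subgroups of $G$. Each finite quotient $G/N$ is perfect: the composition of the (continuous) quotient map with $G/N \to (G/N)^{\mathrm{ab}}$ is a continuous homomorphism from $G$ to a discrete abelian group, which must be trivial by perfectness of $G$. Therefore, by the known result for finite perfect groups, each $G/N$ admits a single normal generator.

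Now, for each $N \in \mathcal{N}$, let
\[
X_N = \{ g \in G : \text{the image of } g \text{ in } G/N \text{ normally generates } G/N \}.
\]
Since $X_N$ is a (nonempty) union of $N$-cosets and $N$ is open, $X_N$ is clopen in $G$. I claim the family $\{X_N\}_{N \in \mathcal{N}}$ has the finite intersection property. Indeed, given $N_1,\dots,N_k \in \mathcal{N}$, set $M = N_1 \cap \dots \cap N_k$; since $\mathcal{N}$ is directed, we may choose $M'\in\mathcal{N}$ contained in $M$. Pick a normal generator $\bar g$ of $G/M'$ and lift it to some $g \in G$. For each $i$, the image of $\bar g$ in the further quotient $G/N_i$ is still a normal generator, since normal closures are preserved by surjections. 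Hence $g \in X_{N_1} \cap \cdots \cap X_{N_k}$.

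Since $G$ is compact (being profinite) and the $X_N$ are closed with the finite intersection property, there exists $g \in \bigcap_{N \in \mathcal{N}} X_N$. Let $H$ denote the smallest closed normal subgroup of $G$ containing this $g$. For every $N \in \mathcal{N}$, the image of $H$ in $G/N$ is the normal closure of the image of $g$, which is all of $G/N$; equivalently $HN = G$. Because $H$ is closed and $\mathcal{N}$ forms a neighbourhood basis of the identity in the profinite group $G$, we have $H = \bigcap_{N \in \mathcal{N}} HN = G$, completing the proof.

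The main obstacle is arranging a \emph{compatible} choice of normal generators across the inverse system; this is handled precisely by the finite-intersection/compactness argument above, with the verification that each $X_N$ is clopen as the key topological input. The fact that quotients of a perfect topological group are again perfect, and the invocation of the finite case as a black box, are the other two ingredients.
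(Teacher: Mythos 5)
Your proof is correct and takes essentially the same approach as the paper: the same inverse-limit setup, the same clopen sets of fibrewise normal generators, and the same compactness/finite-intersection argument. The only minor difference is the concluding step, where you use $H=\bigcap_N HN=G$ (valid since $H$ is closed and the open normal subgroups form a neighbourhood basis of the identity), whereas the paper invokes the standard fact that every closed normal subgroup of a profinite group is an intersection of open normal subgroups; both are routine and equivalent here.
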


\begin{proof}
Let $G=\varprojlim G/G_{\alpha}$, be a perfect profinite group, where $G_{\alpha}$ runs over the open normal subgroups of $G$. Each $G/G_{\alpha}$ is a perfect finite group and thus it is normally generated by a single element. Denote by $X_\alpha$ the set of elements of $G$ whose image normally generates $G/G_{\alpha}$. Thus $X_\alpha$ is a non-empty and clopen subset of $G$. The family of these sets is directed and thus its intersection is non-empty by compactness of $G$. We claim that any element $g$ in this intersection normally generates $G$. Indeed, suppose that $g$ is contained in a closed normal subgroup $N$ of $G$. In a profinite group, every closed subgroup is an intersection of open subgroups. Since moreover open subgroups have finitely many conjugates, we see that $N$ is an intersection of open normal subgroups. By construction, $g$ does not belong to any $G_\alpha\neq G$. It follows $N=G$, as claimed.
\end{proof}

We shall repeatedly but tacitly use the following fact: if $S$ is a subset of a topological group $G$ and $N\lhd G$ is a closed normal subgroup, then the image of $S$ in $G/N$ is dense (if and) only if $SN$ is dense in $G$. Indeed, the quotient map $G\to G/N$ is open.

\begin{lemma}\label{lem:sol}
Let $G$ be a perfect topological group, and let $N$ be a closed normal soluble subgroup of $G$.
If $G/N$ is normally generated by a single element, then so is $G$.
\end{lemma}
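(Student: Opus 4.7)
The plan is to induct on the derived length $\ell$ of $N$, with the abelian case ($\ell=1$) as the crux. Throughout the argument, if $\bar g\in G/N$ normally generates $G/N$, I pick any lift $g\in G$ and let $H$ denote the closure of the normal subgroup of $G$ generated by $g$; the goal is to show $H=G$. The image of $H$ in $G/N$ is a normal subgroup (not necessarily closed) containing $\bar g$, so its closure equals $G/N$. By the openness fact recorded just before the lemma, this is equivalent to saying that $HN$ is dense in $G$.

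For the abelian case, the image $NH/H$ is then a dense abelian subgroup of the Hausdorff group $G/H$. Because the commutator map is continuous and vanishes on a dense subset, the whole of $G/H$ must be abelian. But $G$ is perfect, so the continuous homomorphism $G\to G/H$ to an abelian Hausdorff group must be trivial, whence $H=G$. For the inductive step with $\ell\ge 2$, I set $M:=\overline{[N,N]}$; this is characteristic in $N$, hence closed normal in $G$, of derived length at most $\ell-1$, and $N/M$ is closed normal and abelian in the perfect quotient $G/M$. Since $(G/M)/(N/M)\cong G/N$ is normally generated by a single element, the abelian case applied to $G/M$ yields a normal generator of $G/M$; the inductive hypothesis applied to the pair $(G,M)$ then yields a normal generator of $G$.

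The only conceptually delicate point is the combination in the abelian case of two continuity facts: continuity of the commutator map, which forces the closure of an abelian subgroup in a Hausdorff topological group to remain abelian, and the topological notion of perfectness, which rules out nontrivial continuous homomorphisms to abelian Hausdorff groups. Beyond this, the argument is routine and I do not foresee a significant obstacle.
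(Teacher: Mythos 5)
Your proof is correct, and its skeleton is exactly the paper's: lift the normal generator to $g\in G$, let $H$ be the smallest closed normal subgroup containing $g$, observe that $HN$ is dense in $G$, and use perfectness of the Hausdorff quotient $G/H$ to force $H=G$. Where you diverge is in how the solubility of $N$ is spent. The paper does it in one stroke: the image of $N$ in $G/H$ is dense and soluble, hence $G/H$ is itself soluble (closure of a soluble subgroup is soluble, by iterating the continuity-of-commutators argument), and a perfect soluble topological group is trivial. You instead induct on derived length, peeling off one abelian layer at a time via the quotients $G/\overline{[N,N]}$, so that the continuity fact is only ever applied in its abelian form. Your route is longer but more self-contained; the paper's is shorter but delegates the ``dense soluble image $\Rightarrow$ soluble'' step to the reader. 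One caveat: your assertion that $M=\overline{[N,N]}$ has derived length at most $\ell-1$ is itself an instance of the closure-of-soluble fact, so as written your induction does not quite confine all the topological work to the abelian base case, contrary to your closing remark. This is harmless---the fact is standard and is proved by the same two-step continuity argument you invoke---but if you want the induction to rest on the abelian fact alone, induct instead on the length of the chain $N\geq \overline{N^{(1)}}\geq \overline{N^{(2)}}\geq\cdots$ of closures of the (abstract) derived series: each member is closed and normal in $G$, each successive quotient $\overline{N^{(k)}}/\overline{N^{(k+1)}}$ is abelian because it contains the image of $N^{(k)}$ as a dense abelian subgroup, and the chain reaches $\{e\}$ after $\ell$ steps because $G$ is Hausdorff.
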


\begin{proof}
Let $g\in G$ be such that its image in the quotient $G/N$ normally generates it. Let $H$ be the minimal closed normal subgroup of $G$ containing $g$. Then $HN$ is dense in $G$. Thus, the canonical homomorphism $N\to G/H$ has dense image, which implies that $G/H$ is soluble. Being also perfect, it must be trivial. Thus $H=G$ as needed.
\end{proof}

We endow the product of any family of topological groups with the product topology. We recall that all but finitely many factors must be compact if the product is locally compact --- for instance, if it appears as a closed subgroup of a locally compact group. In order to use structure theory, we need the following.

\begin{proposition}\label{prop:simple}
Let $G$ be a perfect topological group. Let $H$ be a closed normal subgroup of $G$ which is the topological direct product of topologically simple topological groups. If $G/H$ is generated by a single element, then so is $G$.
\end{proposition}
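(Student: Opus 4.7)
The plan is to lift any normal generator $\bar g$ of $G/H$ to an element $g\in G$ and produce $h\in H$ such that $gh$ normally generates $G$. Let $N\lhd G$ denote the smallest closed normal subgroup containing $gh$; its image in $G/H$ is normal and contains $\bar g$, hence is dense, and so by the tacit principle recalled just above, $NH$ is dense in $G$. The whole content of the Proposition therefore reduces to arranging $H\subseteq N$ by a judicious choice of $h$.

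Writing $H=\prod_\alpha S_\alpha$ with each $S_\alpha$ topologically simple, I would first observe that conjugation by $G$ permutes the factors $S_\alpha$ and that for each $\alpha$ the intersection $N\cap S_\alpha$ is a closed normal subgroup of the topologically simple group $S_\alpha$, hence is either trivial or equal to $S_\alpha$; moreover, whether $S_\alpha\subseteq N$ is an invariant of the $G$-orbit of $\alpha$. Since the algebraic direct sum of the $S_\alpha$ is dense in the topological product $H$, it suffices to secure, in each $G$-orbit of factors, at least one $\alpha$ with $N\cap S_\alpha\neq\{e\}$.

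Fix such an orbit and let $\sigma$ denote the permutation of the index set induced by $g$. For $k\in H$ the commutator $[k,gh]$ lies in $N\cap H$ and a short computation gives $[k,gh]=k\cdot\psi(k)^{-1}$, where $\psi$ is conjugation by $gh$ restricted to $H$ (which still permutes the factors by $\sigma$). Feeding in $k$'s supported on a single factor $S_\alpha$ produces elements of $N\cap H$ whose support is contained in $\{S_\alpha,S_{\sigma(\alpha)}\}$: if $\sigma$ fixes $\alpha$ they already lie in $S_\alpha$, and a choice of $h_\alpha$ non-central in $S_\alpha$---possible by simplicity whenever $S_\alpha$ is non-abelian---renders $[k_\alpha,h_\alpha]\in N\cap S_\alpha$ non-trivial for some $k_\alpha$. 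If $\sigma(\alpha)\neq\alpha$ one combines these commutators with their $g^i$-conjugates running around the cyclic $\langle g\rangle$-orbit of $\alpha$, exploiting the $G$-normality of $N$, to extract an element of $N$ non-trivially supported on a single factor of the orbit.

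The main obstacle I anticipate is the case of an abelian simple factor $S_\alpha$ that is fixed pointwise by conjugation by $g$: the commutator machine degenerates to the identity and neither of the above strategies yields anything. This is precisely where the hypothesis that $G$ is perfect must be used essentially, because such an $S_\alpha$ would survive, after the obvious quotients, as a non-trivial abelian topological quotient of $G$, which is impossible. Turning this heuristic into a clean argument---for instance, by passing to $G/N'$ for $N'$ the closed normal subgroup spanned by all the factors already placed into $N$, and invoking Lemma~\ref{lem:sol} or the perfectness of the quotient directly---is the delicate point of the proof and is what actually forces the perfectness assumption in the statement.
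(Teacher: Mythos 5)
Your overall shape is fine (reducing everything to arranging $H\subseteq N$ by choosing $h$), and your treatment of the non-abelian factors is essentially workable, by a genuinely different route from the paper's: you argue coordinate-by-coordinate with commutators $[k,gh]$ and the permutation $\sigma$, whereas the paper takes $N$ to be the closed normal subgroup generated by $g$ alone, decomposes $H=[N,H]\times H_1$ with $N$ centralizing $H_1$, picks a single $h\in H_1$ non-trivial in every coordinate, and finishes with a density/centralizer/perfectness argument that never returns to individual coordinates. One local repair your version needs even in the non-abelian case: when $\sigma(\alpha)=\alpha$, taking $h_\alpha$ non-central is not enough, because conjugation by $g$ on $S_\alpha$ could itself be the inner automorphism $c_{h_\alpha^{-1}}$, making conjugation by $gh$ trivial on $S_\alpha$; what you must arrange is that $c_g|_{S_\alpha}\circ c_{h_\alpha}\neq\mathrm{id}$, which is possible precisely because $\mathrm{Inn}(S_\alpha)$ is non-trivial when $S_\alpha$ is non-abelian.

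The genuine gap is the abelian factors, and it is larger than your closing paragraph suggests. First, with abelian factors present, your opening observation --- that conjugation by $G$ permutes the factors --- is simply false: take $G=\mathbf{F}_p^2\rtimes\mathrm{SL}_2(\mathbf{F}_p)$ with $p\geq 5$ (a perfect group whose quotient by $H=\mathbf{F}_p^2$ is normally generated by one element), where $H$ is the product of two abelian topologically simple groups $\mathbf{Z}/p$; conjugation mixes the two factors, and indeed no single factor is even normal in $G$. The same example shows why your proposed repair is not rigorous: an abelian factor fixed by conjugation by $g$ need not be normal in $G$, so there are no ``obvious quotients'' in which it survives as an abelian topological quotient of $G$. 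The missing idea --- and it is the very first step of the paper's proof --- is that the product $A$ of all abelian factors is characteristic in $H$: the closure of $[H,H]$ equals the product of the non-abelian factors (each non-abelian topologically simple factor equals the closure of its own commutator subgroup), and $A$ is the centralizer in $H$ of this closure, since non-abelian topologically simple groups have trivial centre. Hence $A$ is a closed normal abelian, in particular soluble, subgroup of $G$, and Lemma~\ref{lem:sol} lets you replace $G$ by $G/A$, i.e.\ assume from the outset that every factor is non-abelian. With that reduction in hand, your orbit-and-commutator argument (with the repair above) does complete the proof; without it, the argument does not get off the ground when abelian factors are present.
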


\begin{proof}
Let $H=\prod_{i\in I}S_i$, where $S_i$ is  a simple topological group for each $i$ in some index set $I$. We can assume that no $S_i$ is abelian by applying Lemma~\ref{lem:sol} to the product of all abelian factors $S_i$. Let $N$ be any closed normal subgroup of $G$. We denote by $[N,H]$ the closed subgroup generated by commutators $[n,h]$ with $n\in N$ and $h\in H$. Hence $[N,H]$ is a closed normal subgroup of $G$ and thus also of $H$. Therefore, there is a subset $J$ of $I$ such that $[N,H]=\prod_{i\in J}S_i$. Let $H_1:=\prod_{i\in I\setminus J}S_i$. Then $H=[N,H]\times H_1$ and the action of $N$ by conjugation induces a permutation on $I$, permuting the simple subgroups $S_i$. As $[N,H]$ is normalized by $N$, this permutation action  on $I$ splits into actions on $J$ and on $I\setminus J$. In particular, $N$ normalizes $H_1$. Thus $[N,H_1]\subset H_1$. But on the other hand $[N,H_1]\subset [N,H]$ and thus $[N,H_1]$ is trivial since $[N,H]\cap H_1$ is trivial. In other words, $N$ centralizes $H_1$.

Let now $g$ be an element of $G$ whose image in $G/H$ is normally generating. We apply the above argument to the smallest closed normal subgroup $N$ of $G$ containing $g$. This yields a dense normal subgroup $NH=N H_1$ in $G$, with $N$ and $H_1$ centralizing each other. Choose an element $h\in H_1$ that is non-trivial in each coordinate $S_i$ when $i\in I\setminus J$. Thus the group $H_1$ is normally generated by $h$ by a standard argument: if $L\lhd H_1$ is the minimal closed normal subgroup containing $h$, then for all $i\in I\setminus J$,  $[L, S_i]$ is a non-trivial closed normal subgroup of $S_i$ and hence equals $S_i$. Therefore $L$ contains $S_i$ and thus indeed $L=H_1$.

We claim that $gh$ normally generates $G$. Let thus $M\lhd G$ be the minimal closed normal subgroup containing $gh$. The image of $M$ in $G/N$ contains $ghN=hN$. Since the image of $H_1$ in $G/N$ is dense, we conclude that the image of $M$ is dense too. That is, $NM$ is dense in $G$. Turning things around, the image of $N$ in $G/M$ is dense and therefore the image of $H_1$ in $G/M$ is central. Since $H_1$ is perfect, this image of $H_1$ is in fact trivial, and thus $h\in M$. It now follows $g\in M$, hence $N<M$ and thus $NM=M$ which finally implies $M=G$ as claimed.
\end{proof}

\bigskip
We now prove Theorem~\ref{thm:main}. Let $G$ be a compactly generated locally compact group without infinite discrete quotients. We begin with some classical structure theory. Because of the solution of Hilbert's fifth problem, there exists a maximal normal compact subgroup $K$ in the identity component $G^\circ$ of $G$ and the quotient $G^\circ/K$ is a connected Lie group; see Lemma~2.2 in~\cite{Caprace-Monod_compact}. Since $K$ is necessarily unique, it is also characteristic in $G$. Accordingly, its identity component $K^\circ$, the commutator subgroup $[K^\circ, K^\circ]$ and the centre $Z([K^\circ, K^\circ])$ are all characteristic in $G$. Therefore, in view of Lemma~\ref{lem:sol}, we can assume that $Z([K^\circ, K^\circ])$ is trivial. In that case, $[K^\circ, K^\circ]$ is a product of (possibly infinitely many) simple compact Lie groups; see Theorem~9.19 in~\cite{Hofmann-Morris} (recalling that the commutator of a compact connected group is ``semisimple'' in the terminology of~\cite{Hofmann-Morris}). Since these simple Lie groups are centre-free, they are topologically simple. In view of Proposition~\ref{prop:simple}, we can therefore assume that $[K^\circ, K^\circ]$ itself is trivial. Now $K^\circ$ is abelian and can be assumed trivial by Lemma~\ref{lem:sol}. We deduce that $K$ is central in $G^\circ$ since any totally disconnected normal subgroup of a connected group is central; appealing one more time to Lemma~\ref{lem:sol}, we can assume $K$ trivial. In other words, $G^\circ$ is a Lie group. Applying Lemma~\ref{lem:sol}, we can assume that its radical as well as its centre are trivial. Therefore $G^\circ$ is a centre-free semi-simple connected Lie group, and thus a product of topologically simple groups. Proposition~\ref{prop:simple} shows that we can assume it to be trivial.

\bigskip
The conclusion of these reductions is that we have brought ourselves to the case where $G$ is totally disconnected, and thus classical structure theory will not help us any further.

\bigskip
Let $G^+$ denote the discrete residual of $G$, i.e. the intersection of all open normal subgroups of $G$. By Theorem~F in~\cite{Caprace-Monod_compact}, $G^+$ is  a characteristic cocompact subgroup of $G$ and  has no non-trivial discrete quotients. In particular, it is contained in every cocompact normal subgroup of $G$ since all compact quotients of $G$ are profinite. By Lemma~\ref{lem:pro}, there exists an element in $G/G^+$ that normally generates it. Let $gG^+$ be the corresponding coset in $G$. Thus, no element of $gG^+$ is contained in any cocompact normal proper subgroup of $G$.

\smallskip
Every non-cocompact closed normal subgroup of $G$ is contained in a maximal one, see Proposition~5.2 in~\cite{Caprace-Monod_compact}. It was proved in~\cite{Caprace-Monod_compact}  that there are only finitely many such maximal non-cocompact closed normal subgroups in $G$ --- indeed, use Corollary~5.1 of~\cite{Caprace-Monod_compact} as in the proof of Theorem~A ibidem. We denote them by $M_1, \ldots, M_k$ and assume that $k\geq 1$ since otherwise $G$ is compact and we are done by Lemma~\ref{lem:pro}. We claim that there are elements in the coset $gG^+$ which do not belong to any of the $M_i$. Such elements are therefore not contained in any proper closed normal subgroup of $G$; that is, any such element normally generates $G$.

\smallskip
 In order to prove the claim, suppose for a contradiction that $gG^+$ is contained in the union of all $M_i$. Then $gG^+\cap M_i$ has non-empty interior in $gG^+$  for some $i$. Thus $G^+\cap g^{-1}M_i$ has some interior point $x$ relatively to $G^+$. Therefore, the identity is an interior point of $G^+\cap x^{-1}g^{-1}M_i$ within $G^+$. Now $x^{-1}g^{-1}M_i = M_i$ and $G^+\cap M_i$ is an open normal subgroup of $G^+$. As $G^+$ has no non-trivial discrete quotients we conclude that $G^+\cap M_i=G^+$ which means $G^+<M_i$. This is absurd because $G^+$ is cocompact in $G$ but $M_i$ is not. This completes the proof of the claim and therefore of Theorem~\ref{thm:main}.\qed

\section{On compact generation}\label{sec:Yves}
Yves de Cornulier has kindly allowed us to present his example of a perfect locally compact group $G$ which is not normally generated by a single element although it has no infinite discrete (group) quotient.

\smallskip\noindent
Assume that $F$ is a finite perfect group with a subgroup $K<F$ such that

\begin{enumerate}[(i)]
\item $F$ is normally generated by $K$,\label{pt:K}
\item $F$ is not normally generated by any single element of $K$.\label{pt:noK}
\end{enumerate}

\noindent
Let $G$ be the group of sequences in $F$ that are ultimately in $K$. The ``restricted product'' locally compact group topology on $G$ is defined by the identity neighbourhood base $U_n< K^\NN$ consisting of sequences that are trivial on the first $n$ coordinates. The perfect group $F^{(\NN)}$ of finitely supported sequences is then dense in $G$, so that $G$ is perfect. Moreover, condition~\eqref{pt:noK} ensures that $G$ cannot be normally generated by a single element.

\smallskip
Consider any discrete quotient of $G$. Its kernel must contain some $U_n$; being normal, it follows by condition~\eqref{pt:K} that this quotient is a quotient of $F^n$, thus it is finite.

\bigskip
It remains to see that a pair $K<F$ as above indeed exists. Let $c$ be an element of order two in a non-abelian finite simple group $S$. Set $F=S^3$ and let $K$ be the (Klein) subgroup generated by $(1,c,c)$ and $(c,c,1)$. Since the maximal normal subgroups of $F$ are the kernels of the three canonical projections $F\to S$, which do not contain $K$, condition~\eqref{pt:K} holds. On the other hand, any element of $K$ is in one of these kernels, whence condition~\eqref{pt:noK}.


\bibliographystyle{../../BIB/abbrv}
\bibliography{../../BIB/ma_bib}

\def\cprime{$'$}
\begin{thebibliography}{1}

\bibitem{Baumslag-Myasnikov-Shpilrain}
G.~Baumslag, A.~G. Myasnikov, and V.~Shpilrain.
\newblock Open problems in combinatorial group theory. {S}econd edition.
\newblock In {\em Combinatorial and geometric group theory ({N}ew {Y}ork,
  2000/{H}oboken, {NJ}, 2001)}, volume 296 of {\em Contemp. Math.}, pages
  1--38. Amer. Math. Soc., Providence, RI, 2002.

\bibitem{Caprace-Monod_compact}
P.-E. Caprace and N.~Monod.
\newblock Decomposing locally compact groups into simple pieces.
\newblock {\em Math. Proc. Cambridge Philos. Soc.}, 150(1):97--128, 2011.

\bibitem{Hofmann-Morris}
K.~H. Hofmann and S.~A. Morris.
\newblock {\em The structure of compact groups}, volume~25 of {\em de Gruyter
  Studies in Mathematics}.
\newblock Walter de Gruyter \& Co., Berlin, augmented edition, 2006.
\newblock A primer for the student---a handbook for the expert.

\bibitem{Lennox-Wiegold}
J.~C. Lennox and J.~Wiegold.
\newblock Generators and killers for direct and free products.
\newblock {\em Arch. Math. (Basel)}, 34(4):296--300, 1980.

\bibitem{Mazurov-Khukhro}
V.~D. Mazurov and E.~I. Khukhro, editors.
\newblock {\em The {K}ourovka notebook}.
\newblock Russian Academy of Sciences Siberian Division Institute of
  Mathematics, Novosibirsk, augmented edition, 1999.
\newblock Unsolved problems in group theory.

\bibitem{Monod-Ozawa-Thom}
N.~Monod, N.~Ozawa, and A.~Thom.
\newblock Is an irng singly generated as an ideal?
\newblock {\em Internat. J. Algebra Comput.}, 22(4):xx--xx, 2012.

\bibitem{Montgomery-Zippin}
D.~Montgomery and L.~Zippin.
\newblock {\em Topological transformation groups}.
\newblock Interscience Publishers, New York-London, 1955.

\end{thebibliography}

\end{document}